
\documentclass[letterpaper, onecolumn, 12pt]{ieeeconf}  

\IEEEoverridecommandlockouts                              



\usepackage{amsmath} 
\usepackage{amssymb}  
\usepackage{algorithm}
\usepackage{algorithmic}

\usepackage{amsfonts}

\usepackage{amsthm}
\usepackage{bm}

\newtheorem{assumption}{Assumption}
\newtheorem{theorem}{Theorem}
\newtheorem{proposition}{Proposition}
\newtheorem{lemma}{Lemma}

\title{\LARGE \bf
Fully Asynchronous Push-Sum With Growing Intercommunication Intervals\authorrefmark{1}\thanks{*Research was partially supported by the NSF under grants CNS-1645681, CCF-1527292, IIS-1237022, and IIS-1724990, by the ARO under grant W911NF-12-1-0390, by the NIH under grant 1UL1TR001430, by the Boston University Digital Health Initiative, and by the joint Boston University and Brigham \& Women's Hospital program in Engineering and Radiology.}}

\author{Alex Olshevsky, Ioannis Ch. Paschalidis$^\dag$ and Artin Spiridonoff$^\ddag$
\thanks{$^\dag$ A. Olshevsky and I. Paschalidis are with the ECE Department, Boston University, {\tt\small \{alexols,yannisp\}@bu.edu}.}
\thanks{$^\ddag$ A. Spiridonoff is at the Division of Systems Engineering, Boston University, {\tt\small artin@bu.edu}.}
}

\begin{document}

\maketitle
\thispagestyle{empty}
\pagestyle{empty}

\begin{abstract}

We propose an algorithm for average consensus over a directed graph which is both fully asynchronous and robust to unreliable communications. We show its convergence to the average,
while allowing for slowly growing but potentially unbounded communication failures.

\end{abstract}

\section{Introduction}

Consider a set of agents, whose goal is to reach consensus by exchanging information locally with their neighbors through a directed graph.
There is a large body of work on consensus algorithms.
Ordinary consensus has been shown to converge asymptotically under various scenarios such as growing intercommunicating intervals \cite{lorenz2011convergence}, presence of delays and/or unbounded intercommunication intervals \cite{Blondel2005}. 
Another problem of interest for which extensive research has been carried out is \textit{average} consensus. While most related works study asymptotic convergence, \cite{charalambous2015distributed} studies average consensus in a finite number of steps.
Push-sum is one of the many algorithms for average consensus that was first proposed by \cite{kempe2003gossip}. This algorithm has been widely used to develop protocols that reach average consensus, under different assumptions and scenarios; such as the presence of bounded delays \cite{hadjicostis2014average}, time varying graphs \cite{hadjicostis2012average}\cite{Rezaeinia2017}, or asynchronous communication \cite{benezit2010weighted}.

Since reliable communication is a very restrictive assumption in network applications, or expensive to enforce, recent work has considered algorithms that reach consensus in a setting where communication between agents is unreliable. 
While in this case, push-sum might not converge to average, exponential convergence still holds and the error between the final value and the true average can be characterized \cite{DBLP:journals/corr/GerencserH15}.
In \cite{Vaidya}, Vaidya et al. introduce the technique of running sums (counters) and modify push-sum to overcome possible packet drops and imprecise knowledge of the network in a synchronous communication setting. They prove \textit{almost surely} convergence of their algorithms using weak ergodicity.
Inspired by \cite{Vaidya}, \cite{Schenato} takes this further and develops an asynchronous algorithm for average consensus, which is robust to unreliable communication. This algorithm uses a \textit{broadcast asymmetric} communication protocol; that is, at each iteration only one node is allowed to wake up and transmit information to its neighbors. Exponential convergence of this algorithm is proved under bounded consecutive link failures and nodes' update delays.
 
Consensus and average consensus have a lot of application in other algorithms as well; they can be used as a building block to develop distributed optimization algorithms \cite{tsianos2012push}\cite{varagnolo2016newton}. For example, in \cite{bof2017newton} the authors use a robust version of push-sum as a building block to develop an asynchronous Newton-based distributed optimization algorithm, robust to packet losses.

A lot of available works in the literature assume bounded intercommunication intervals; which motivated us to study and explore sufficient connectivity conditions which allow intercommunication intervals to slowly grow and potentially be unbounded. We propose logarithmically growing upper bounds which guarantee convergence.

Distributed synchronous systems require coordination between the agents. Asynchronous systems, in contrast, do not depend on global clock signals. This can save power as agents do not have to perform computation and communication at every iteration. However, it might require more iterations to converge.
While existing works on push-sum in the presence of link failures assume synchronous \cite{Vaidya} or broadcast asymmetric \cite{Schenato} communication setting, our major contribution in this paper is to develop a \textit{fully asynchronous} robust push-sum algorithm that allows the successive link failures to grow to infinity.

The rest of the paper is organized as follows. In Section \ref{sec:notation formulation} we introduce our notation and define the problem. In Sections \ref{sec:Ordinary} and \ref{sec:push-sum} we study ordinary consensus and push-sum algorithms, respectively, and state our convergence results. In Section \ref{sec:ra_ac}, we propose an asynchronous push-sum algorithm which is robust to unreliable communication links, followed by concluding remarks in Section \ref{sec:conclusion}.

\section{Problem Formulation}\label{sec:notation formulation}
\subsection{Notations and Definitions}
Suppose $\textbf{A}$ is a matrix, by $A_{ij}$ we denote its $(i,j)$ entry.
A matrix is called \textit{(row) stochastic} if it is non-negative and the sum of the elements of each row equals to one. Similarly, a matrix is \textit{column stochastic} if its transpose is stochastic. A matrix is called \textit{doubly stochastic} if it is both column and row stochastic.

To a non-negative matrix $\textbf{A} \in \mathbb{R}^{n\times n}$ we associate a directed graph $\mathcal{G}_\textbf{A}$ with vertex set $\mathcal{N}=\{1,2,\ldots,n\}$ and edge set $\mathcal{E}_\textbf{A}=\{(i,j)\vert  A_{ji}>0 \}$. Note that the graph might contain self-loops. 

By $[\textbf{A}]_\alpha$ we denote the \textit{thresholded} matrix obtained by setting every element of $\textbf{A}$ smaller than $\alpha$ to zero.

Given a sequence of matrices $\textbf{A}^0,\textbf{A}^1,\textbf{A}^2,\ldots$, we denote by $\textbf{A}^{k_2:k_1},k_2\geq k_1$, the product of elements $k_1$ to $k_2$ of the sequence, inclusive, in the following order:
\begin{displaymath}
\textbf{A}^{k_2:k_1}=\textbf{A}^{k_2}\textbf{A}^{k_2-1}\cdots \textbf{A}^{k_1}.
\end{displaymath}

Node $i$ is an \textit{in-neighbor} of node $j$, if there is a directed link from $i$ to $j$. Hence $j$ would be an \textit{out-neighbor} of node $i$. We denote the set of in-neighbors and out-neighbors of node $i$ at time $k$ with $N_i^{-,k}$ and $N_i^{+,k}$, respectively. Moreover, we denote the number of in-neighbors and out-neighbors of node $i$ at time $k$ with $d_i^{-,k}$ and $d_i^{+,k}$, as its \textit{in-degree} and \textit{out-degree}, respectively. If the graph is fixed, we will simply drop the index $k$ in the aforementioned notations.

By $x_{\min}$ and $x_{\max}$ we denote $\min_i x_i$ and $\max_i x_i$, respectively, unless mentioned otherwise.
We also denote a $n \times 1$ column vector of all ones by $\textbf{1}_n$, or $\textbf{1}$ when its size is clear from the context.

We sometimes use the notion of \textit{mass} to denote the value an agent holds, sends or receives. With that in mind, we can think of a value being sent from one node, as a mass being transferred. 

\subsection{Problem Formulation}
Consider a set of $n$ agents $\mathcal{N} = \{1,2,\ldots,n\}$, where each agent  $i$ holds an initial scalar value $x_i^0$. These agents communicate with each other through a sequence of directed graphs. Our goal is to develop protocols through which these agents communicate and update their values so that they reach consensus. Throughout this paper we use the terms \textit{agents} and \textit{nodes} interchangeably. 

Ordinary consensus and push-sum are two main algorithms proposed for this purpose.
In ordinary consensus, each node updates its value by forming a convex combination of the values of its in-neighbors.
In push-sum, average consensus is reached by running two parallel iterations in which, each node splits and sends its value to its out-neighbors and updates its own value by forming the sum of the messages that it has received.

\section{Ordinary Consensus}\label{sec:Ordinary}
Although the main target of this paper is push-sum, in this section we state and prove similar results for ordinary consensus. Comparable results can be found in \cite{lorenz2011convergence}, however the proofs provided here are necessary to understand the methods used in the following sections.

Linear consensus is defined as,
\begin{equation}\label{eq:1}
\textbf{x}^{k+1} = \textbf{A}^k \textbf{x}^k,\;\;k=0,1,\ldots,
\end{equation}
where the matrices $\textbf{A}^k$ are stochastic and $\textbf{x}^k$ is constructed by collecting all $x_i^k$ in a column vector.
Under the following conditions, the iteration \eqref{eq:1} results in consensus, meaning all the $x^k_{i}$ converge to the same value as $k\rightarrow\infty$.

The following assumption ensures sufficient connectivity of the graphs.

\begin{assumption}\label{assump:B_k-connectivity}
There exist a sequence $b_1,b_2,\ldots$ of positive integers such that when we partition the sequence of graphs $\mathcal{G}^0,\mathcal{G}^1,\mathcal{G}^2,\ldots$ to consecutive blocks of length $b_k$, $k=1,2,\ldots$, the graph constructed by the union of the edges in each block, is strongly connected. Also each graph $\mathcal{G}^k$ has a self-loop at every node.
\end{assumption}
Let us define $\mu_0=\lambda_0=0$, and for $k\geq1$:
\begin{gather}
\mu_k=\sum_{j=1}^{k}b_j,\label{eq:mu}\\
\lambda_k=\sum_{j=(k-1)n+1}^{kn}b_j=\mu_{kn} - \mu_{(k-1)n}.\label{eq:lambda}
\end{gather}

The following proposition states sufficient conditions for the convergence of ordinary consensus with growing intercommunication intervals.
\begin{proposition}\label{pro:B_k-convergence}
Suppose there exist some $\alpha>0$ such that the sequence of graphs $\mathcal{G}_{[\textbf{A}^0]_\alpha},\mathcal{G}_{[\textbf{A}^1]_\alpha},\mathcal{G}_{[\textbf{A}^2]_\alpha},\ldots$ satisfies Assumption \ref{assump:B_k-connectivity}. If there exist some $K\geq1,T\geq0,$ such that $\lambda_k\leq-\frac{\ln(k+T)}{\ln(\alpha)}$ for all $k\geq K$, then $x^k$ converges to a limit in span\{\textbf{1}\}.
\end{proposition}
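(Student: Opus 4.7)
The plan is to group the iteration into super-blocks of $n$ consecutive blocks from Assumption \ref{assump:B_k-connectivity} and control the product over each super-block via the Dobrushin coefficient of ergodicity. Define
\[
\textbf{P}_k := \textbf{A}^{\mu_{kn}-1:\mu_{(k-1)n}},\qquad k=1,2,\ldots,
\]
a product of $\lambda_k$ stochastic matrices, each of whose graphs has self-loops at every node and in each of whose $n$ constituent Assumption \ref{assump:B_k-connectivity}-blocks the edge-union is strongly connected.

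The first step, which is the main technical core of the proof, is to show that every entry of $\textbf{P}_k$ is bounded below by $\alpha^{\lambda_k}$. For any ordered pair $(i,j)$, a directed path from $j$ to $i$ in any strongly connected graph on $n$ vertices has length at most $n-1$; one can therefore select one edge of such a path per Assumption \ref{assump:B_k-connectivity}-block (with one block to spare), realize each edge at its actual time of appearance inside its block, and use the ubiquitous self-loops to ``wait'' at the current vertex in every remaining time slot of the super-block. This yields a time-respecting walk of length exactly $\lambda_k$ from $j$ to $i$; since every nonzero weight used is at least $\alpha$ by the thresholding, $(\textbf{P}_k)_{ij} \geq \alpha^{\lambda_k}$.

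The second step is standard. From the entry-wise lower bound the coefficient of ergodicity satisfies $\tau(\textbf{P}_k) \leq 1 - \alpha^{\lambda_k}$. Using submultiplicativity of $\tau$ together with the fact that $\max_i y_i - \min_i y_i \leq \tau(M)(\max_i x_i - \min_i x_i)$ whenever $M$ is stochastic and $\textbf{y} = M\textbf{x}$, the spread of $\textbf{x}^{\mu_{kn}}$ is at most
\[
\Bigl(\prod_{j=1}^{k}(1-\alpha^{\lambda_j})\Bigr)\bigl(\max_i x_i^0 - \min_i x_i^0\bigr).
\]

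Finally, the hypothesis $\lambda_k \leq -\ln(k+T)/\ln\alpha$ and the fact that $\ln\alpha<0$ yield
\[
\alpha^{\lambda_k} \geq \exp(-\ln(k+T)) = \frac{1}{k+T} \qquad \text{for all } k\geq K,
\]
so $\sum_{k\geq K} \alpha^{\lambda_k}$ diverges and hence $\prod_{j=1}^{k}(1-\alpha^{\lambda_j}) \to 0$. The spread of $\textbf{x}^{\mu_{kn}}$ thus vanishes along this subsequence; combined with the fact that $x_{\min}^k$ is non-decreasing and $x_{\max}^k$ is non-increasing under any stochastic update, the full sequence $\textbf{x}^k$ converges to a vector in $\mathrm{span}\{\textbf{1}\}$. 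The principal obstacle is the combinatorial argument in the first step---carefully exhibiting a time-respecting walk of length $\lambda_k$ that uses one path-edge per block and self-loops elsewhere---after which the ergodicity-telescoping bound and the divergent-sum estimate are routine.
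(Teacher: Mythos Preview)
Your argument is correct and mirrors the paper's proof: the paper's Lemma~1 gives the same entrywise lower bound $\alpha^{\lambda_k}$ on the super-block product $\textbf{A}^{\mu_{kn}-1:\mu_{(k-1)n}}$, Lemma~2 provides the spread contraction (with the sharper factor $1-n\alpha^{\lambda_k}$ rather than your $1-\alpha^{\lambda_k}$, though this is immaterial for the divergence test), and Lemmas~3 and~4 supply the monotonicity and infinite-product-to-zero steps exactly as you outline. One small wording caution in your first step: you cannot fix a single $j\to i$ path in advance and allocate its edges one per block, since a prescribed edge need not appear in a prescribed block; the argument that actually works---and which the paper uses---is that the set of vertices reachable from $j$ strictly enlarges after each block (strong connectivity of the block-union forces an edge to leave the current reachable set), so after at most $n-1$ blocks it contains $i$, and self-loops fill the remaining time slots.
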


Before proving the proposition, we need the following lemmas and definitions.
Given a sequence of graphs $\mathcal{G}^0,\mathcal{G}^1,\mathcal{G}^2,\ldots$, we will say node $b$ is reachable from node $a$ in time period $k_1$ to $k_2$ ($k_1<k_2$), if there exists a sequence of directed edges $e^{k_1},e^{k_1+1},\ldots,e^{k_2}$ such that $e^k$ is in $\mathcal{G}^k$, destination of $e^k$ is the origin of $e^{k+1}$ for $k_1\leq k<k_2$, and the origin of $e^{k_1}$ is $a$ and the destination of $e^{k_2}$ is $b$.

\begin{lemma}\label{lem:strictly positive matrix}
Suppose there exists some $\alpha>0$ such that the sequence of graphs $\mathcal{G}_{[\textbf{A}^0]_\alpha},\mathcal{G}_{[\textbf{A}^1]_\alpha},\mathcal{G}_{[\textbf{A}^2]_\alpha},\ldots$ satisfies Assumption \ref{assump:B_k-connectivity}. Then for $l\geq 0$, $\textbf{A}^{\mu_{l+n}-1:\mu_l}$ is a strictly positive matrix, with its elements at least $\alpha^{\mu_{l+n}-\mu_l}$.
\end{lemma}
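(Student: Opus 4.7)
The plan is to interpret the $(i,j)$-entry of the product combinatorially as a sum over time-varying walks, and then invoke Assumption \ref{assump:B_k-connectivity} together with the self-loops at every node to exhibit at least one such walk of weight at least $\alpha^{\mu_{l+n}-\mu_l}$ between every ordered pair $(i,j)$.

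Expanding the matrix product, one has
\[
\bigl(\textbf{A}^{\mu_{l+n}-1:\mu_l}\bigr)_{ij} \;=\; \sum_{k_1,\ldots,k_{T-1}} A^{\mu_{l+n}-1}_{i,k_1}\, A^{\mu_{l+n}-2}_{k_1,k_2} \cdots A^{\mu_l}_{k_{T-1},j},
\]
where $T=\mu_{l+n}-\mu_l$. Each positive summand corresponds to a chronologically ordered walk $j\to k_{T-1}\to\cdots\to k_1\to i$ that uses exactly one edge from each of the graphs $\mathcal{G}^{\mu_l},\ldots,\mathcal{G}^{\mu_{l+n}-1}$. If every edge along this walk is kept in the thresholded graph, i.e.\ has weight at least $\alpha$, the summand alone is at least $\alpha^T=\alpha^{\mu_{l+n}-\mu_l}$. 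So it suffices to produce at least one such thresholded walk of length exactly $T$ between every ordered pair $(i,j)$.

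To do so, fix a source node $a$ and let $R^k(a)$ denote the set of nodes reachable from $a$ by a chronological walk whose edges lie in the thresholded graphs of blocks $l+1,\ldots,l+k$; set $R^0(a)=\{a\}$, which is realized by iterating the self-loop at $a$. I claim $|R^k(a)|\geq \min(k+1,n)$. Indeed, if $R^{k-1}(a)\neq \mathcal{N}$, strong connectivity of the union of edges appearing in block $l+k$ yields an edge $(u,v)$ in that block with $u\in R^{k-1}(a)$ and $v\notin R^{k-1}(a)$. A walk that has reached $u$ by the end of block $l+k-1$ can then be extended by padding with the self-loop at $u$ (weight $\geq\alpha$) up to the time that edge $(u,v)$ fires within block $l+k$, crossing $(u,v)$, and then padding with the self-loop at $v$ through the rest of block $l+k$. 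This shows $v\in R^k(a)$, so the reachable set grows by at least one per block until it saturates, yielding $R^n(a)=\mathcal{N}$.

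Applying this with $a=j$ for every $j$ produces, for each ordered pair $(i,j)$, a chronological walk of length exactly $T$ from $j$ to $i$ using only edges of weight $\geq\alpha$, which gives the desired lower bound on every entry of $\textbf{A}^{\mu_{l+n}-1:\mu_l}$. The main place to be careful is the inductive step: one must check that the self-loop padding on both sides of the ``expanding'' edge $(u,v)$ yields a legitimate walk of the exact length dictated by block $l+k$, rather than one that skips a time step or overruns the block boundary. Once this bookkeeping is pinned down, the lemma follows immediately.
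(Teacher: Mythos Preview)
Your argument is correct and follows essentially the same route as the paper: both use the self-loops to ensure the reachable set from any node is nondecreasing, invoke strong connectivity of each block to force strict growth by at least one node per block, and conclude that after $n$ blocks every node is reachable, with the $\alpha$ lower bound on thresholded entries yielding the $\alpha^{\mu_{l+n}-\mu_l}$ estimate. Your walk-based presentation is slightly more explicit about the padding bookkeeping than the paper's, but the underlying argument is identical.
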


\begin{proof}
Consider the set of reachable nodes from node $i$ in time period $k_1$ to $k_2$ in the graph sequence $\mathcal{G}_{[\textbf{A}^0]_\alpha},\mathcal{G}_{[\textbf{A}^1]_\alpha},\mathcal{G}_{[\textbf{A}^2]_\alpha},\ldots,$ and denote it by $N^{k_2:k_1}$. Since by Assumption \ref{assump:B_k-connectivity} each of these graphs has self-loop at every node, the set of reachable nodes never decreases. If $N^{\mu_{l+m}-1:\mu_l}\neq\{1,2,\ldots,n\}$ then $N^{\mu_{l+m+1}-1:\mu_l}$ is a strict super-set of $N^{\mu_{l+m}-1:\mu_l}$; because in period $\mu_{l+m}$ to $\mu_{l+m+1}-1$ there is an edge in some $\mathcal{G}_{[\textbf{A}^i]_\alpha}$ leading from the set of reachable nodes from $i$, to those not reachable from $i$; this is true because the union of the graphs in block $\mu_{l+m}$ to $\mu_{l+m+1}-1$ is strongly connected. Hence we conclude $N^{\mu_{l+n}-1:\mu_l}=\{1,2,\ldots,n\}$ and $\textbf{A}^{\mu_{l+n}-1:\mu_l}$ is strictly positive. Furthermore, since every positive element of $[\textbf{A}^k]_\alpha$ is at least $\alpha$ by construction, every element of $\textbf{A}^{\mu_{l+n}-1:\mu_l}$ is at least $\alpha^{\mu_{l+n}-\mu_l}$.
\end{proof}

\begin{lemma}\label{lem:max-min inequality}
Suppose $\textbf{A}$ is a stochastic matrix with entries at least $\beta>0$. If $\textbf{v}=\textbf{A}\textbf{u}$ then,
\begin{equation}
v_{\max}-v_{\min}\leq (1-n\beta)\left(u_{\max}-u_{\min}\right).
\end{equation}
\end{lemma}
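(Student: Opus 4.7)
The plan is to use the classical ``coupling'' argument for contraction of stochastic matrices. Let $p$ and $q$ be indices with $v_p = v_{\max}$ and $v_q = v_{\min}$. Then $v_{\max} - v_{\min} = \sum_{j=1}^n (A_{pj} - A_{qj}) u_j$, and I want to upper bound this in terms of $u_{\max} - u_{\min}$.

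The key observation is that we can pull out a common nonnegative part from rows $p$ and $q$. Define $m_j = \min(A_{pj}, A_{qj})$ for each $j$. Since every entry is at least $\beta$, we have $m_j \geq \beta$, hence $\sum_{j=1}^n m_j \geq n\beta$. Write $A_{pj} = m_j + \alpha_j$ and $A_{qj} = m_j + \gamma_j$, where $\alpha_j, \gamma_j \geq 0$ and at most one of them is positive for each $j$. Because $\textbf{A}$ is stochastic, $\sum_j \alpha_j = \sum_j \gamma_j = 1 - \sum_j m_j \leq 1 - n\beta$.

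Substituting, $v_p - v_q = \sum_j \alpha_j u_j - \sum_j \gamma_j u_j$, and bounding each sum by the extreme values of $u$ gives
\begin{equation*}
v_p - v_q \leq \Bigl(\sum_j \alpha_j\Bigr) u_{\max} - \Bigl(\sum_j \gamma_j\Bigr) u_{\min} = \Bigl(1 - \sum_j m_j\Bigr)(u_{\max} - u_{\min}) \leq (1 - n\beta)(u_{\max} - u_{\min}),
\end{equation*}
which is exactly what we want.

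There is no real obstacle here; the only subtle point is the decomposition step, where one has to remember that the common lower envelope $m_j$ contributes the same weighted sum to $v_p$ and to $v_q$, so those contributions cancel in the difference. Once this decomposition is written down, the lower bound $\sum_j m_j \geq n\beta$ follows directly from the entrywise hypothesis, and the stochasticity of $\textbf{A}$ gives the matching upper bound on $\sum_j \alpha_j$ and $\sum_j \gamma_j$.
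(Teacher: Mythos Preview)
Your argument is correct: the decomposition via $m_j=\min(A_{pj},A_{qj})$ is exactly the standard Dobrushin-coefficient contraction argument, and every step checks out (in particular $\sum_j m_j\le 1$ since $m_j\le A_{pj}$, so $1-\sum_j m_j\ge 0$, and the hypothesis forces $\beta\le 1/n$, so $1-n\beta\ge 0$ as well). The paper does not actually supply its own proof of this lemma---it simply cites Seneta's book---and the argument you wrote is essentially the one found there, so there is nothing further to compare.
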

This lemma is proved in \cite[Theorem 3.1 \& Exercise 3.8]{seneta2006non}.

\begin{lemma}\label{lem:stochastic matrices}
Suppose $\textbf{A}$ is a stochastic matrix and $\textbf{v}=\textbf{A}\textbf{u}$. Then for all $i$,
\begin{equation}
u_{\min} \leq v_i \leq u_{\max}.
\end{equation}
\end{lemma}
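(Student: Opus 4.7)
The plan is to exploit the defining property of a row-stochastic matrix, namely that each row is a probability vector, so that each coordinate of $\textbf{v}=\textbf{A}\textbf{u}$ is a convex combination of the coordinates of $\textbf{u}$ and hence cannot escape the interval $[u_{\min},u_{\max}]$.

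Concretely, I would fix an index $i$ and start from the entry-wise expansion
\begin{equation*}
v_i = \sum_{j=1}^{n} A_{ij}\, u_j.
\end{equation*}
For the upper bound I would replace each $u_j$ in the sum by $u_{\max}$; since $A_{ij}\ge 0$, this only increases the sum, giving
\begin{equation*}
v_i \le \sum_{j=1}^{n} A_{ij}\, u_{\max} = u_{\max}\sum_{j=1}^{n} A_{ij} = u_{\max},
\end{equation*}
where in the last step I use that $\textbf{A}$ is stochastic so the row sums equal $1$. The lower bound is obtained by the symmetric substitution $u_j \ge u_{\min}$, which yields $v_i \ge u_{\min}\sum_j A_{ij} = u_{\min}$.

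There is no real obstacle here; the only point to be careful about is to use both hypotheses on $\textbf{A}$ (non-negativity to preserve inequalities under multiplication, and the unit row sum to collapse the bound). Since $i$ was arbitrary, the two inequalities hold for every coordinate, completing the proof.
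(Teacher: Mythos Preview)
Your proof is correct and matches the paper's argument exactly: the paper simply remarks that each $v_i$ is a convex combination of the entries of $\textbf{u}$, which is precisely what your row-wise expansion with $A_{ij}\ge 0$ and $\sum_j A_{ij}=1$ makes explicit.
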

This lemma holds true because each $v_i$ is a convex combination of elements of $\textbf{u}$.

\begin{lemma}\label{lem:alpha}
Suppose $0<\alpha_k<1$ for $k=1,\ldots,\infty$, then $\prod_{k=1}^{\infty}\left(1-\alpha_k\right)=0$ if and only if $\sum_{k=1}^{\infty}\alpha_k=\infty$.
\end{lemma}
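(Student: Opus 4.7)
The plan is to prove the two directions separately using the classical comparison $1-x\leq e^{-x}$ (valid for all real $x$) together with a matching one-sided lower bound valid when $x$ is small.

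For the implication $\sum_k \alpha_k = \infty \Rightarrow \prod_k(1-\alpha_k)=0$, I would apply $1-x\leq e^{-x}$ termwise to obtain
\begin{equation*}
0 \leq \prod_{k=1}^{N}(1-\alpha_k) \leq \exp\!\Bigl(-\sum_{k=1}^{N}\alpha_k\Bigr).
\end{equation*}
Letting $N\to\infty$ and using divergence of the series forces the right-hand side to zero, so the monotone decreasing sequence of non-negative partial products converges to zero, i.e., the infinite product is zero.

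For the converse I would prove the contrapositive: if $\sum_k \alpha_k < \infty$, then $\prod_k(1-\alpha_k) > 0$. Summability implies $\alpha_k\to 0$, so there is some $K$ with $\alpha_k \leq 1/2$ for all $k\geq K$. On this range, the Taylor expansion $-\ln(1-x) = x + x^2/2 + x^3/3 + \cdots$ is dominated termwise by the geometric series $x + x^2 + x^3 + \cdots = x/(1-x)$, which is at most $2x$ when $x\leq 1/2$; equivalently $1-x \geq e^{-2x}$ on $[0,1/2]$. Multiplying gives
\begin{equation*}
\prod_{k=K}^{N}(1-\alpha_k) \geq \exp\!\Bigl(-2\sum_{k=K}^{N}\alpha_k\Bigr) \geq \exp\!\Bigl(-2\sum_{k=1}^{\infty}\alpha_k\Bigr) > 0,
\end{equation*}
so the tail product is bounded below by a strictly positive constant. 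The finite head $\prod_{k=1}^{K-1}(1-\alpha_k)$ is strictly positive since each factor lies in $(0,1)$, so the whole infinite product is strictly positive, completing the contrapositive.

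There is no serious obstacle here; the argument is essentially the standard trick of bracketing $\ln(1-x)$ between $-x$ and $-x/(1-x)$. The only subtlety worth flagging is the need to pass to a tail where $\alpha_k\leq 1/2$ so that the lower bound $1-x \geq e^{-2x}$ applies, but this restriction is automatic once $\alpha_k \to 0$ is invoked.
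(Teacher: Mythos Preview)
Your proof is correct and is the classical argument via the exponential bounds $e^{-x/(1-x)}\leq 1-x\leq e^{-x}$. The paper itself does not give a proof at all: it simply cites Br\'emaud's \emph{Markov Chains} (Appendix, Theorem~1.9) and skips the argument, so there is nothing substantive to compare against. Your write-up is in fact more informative than what appears in the paper.
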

This lemma is proved in \cite[Appendix: Theorem 1.9]{bremaud2013markov} and we will skip the proof here.

\begin{proof}[Proof of Proposition \ref{pro:B_k-convergence}]
By Lemma \ref{lem:strictly positive matrix}, we have for $k\geq 1$,
\begin{displaymath}
\left[\textbf{A}^{\mu_{kn}-1:\mu_{(k-1)n}}\right]_{ij}\geq\alpha^{\mu_{kn}-\mu_{(k-1)n}}=\alpha^{\lambda_{k}}.
\end{displaymath}
Applying Lemma \ref{lem:max-min inequality}, we get,
\begin{equation}\label{eq:max-min ordinary}
x_{\max}^{\mu_{kn}}-x_{\min}^{\mu_{kn}}\leq \left(1-n\alpha^{\lambda_{k}}\right)\left(x_{\max}^{\mu_{(k-1)n}}-x_{\min}^{\mu_{(k-1)n}}\right).
\end{equation}
Hence, using \eqref{eq:max-min ordinary} for $k=1,\ldots,l$ we obtain,
\begin{equation*}
x_{\max}^{\mu_{ln}}-x_{\min}^{\mu_{ln}}\leq \prod_{k=1}^l\left(1-n\alpha^{\lambda_{k}}\right)\left(x_{\max}^0-x_{\min}^0\right).
\end{equation*}
We have $0<\alpha<1$ and $\lambda_k\leq-\frac{\ln(k+T)}{\ln(\alpha)}$ for all $k\geq K$. It follows,
\begin{align*}
\sum_{k=1}^{\infty}n\alpha^{\lambda_k}\geq \sum_{k=K}^{\infty}n\alpha^{\lambda_k}\geq\sum_{k=K}^{\infty}n\alpha^{-\frac{\ln(k+T)}{\ln(\alpha)}}
=\sum_{k=K}^{\infty}n\left(\alpha^{\frac{1}{\ln(\alpha)}}\right)^{-\ln(k+T)}
=\sum_{k=K}^{\infty}\frac{n}{k+T}=\infty.
\end{align*}
Using Lemmas \ref{lem:stochastic matrices} and \ref{lem:alpha} and \eqref{eq:max-min ordinary} we conclude that Proposition \ref{pro:B_k-convergence} holds.
\end{proof} 
Proposition \ref{pro:B_k-convergence} proves the convergence of $x_i^k$'s to a value which is not necessarily the total average and depends on the sequence of matrices. However if the matrices $\textbf{A}^k$ are doubly stochastic, the sum of the values of all nodes (agents) is preserved and therefore the algorithm converges to \textit{average} consensus.

Slight modifications to Example 1.2, Chapter 7 of \cite{bertsekas1989parallel} shows that if intercommunication intervals grow logarithmically in time, ordinary consensus fails to reach consensus. 

\section{Push-Sum}\label{sec:push-sum}
Push-sum is an algorithm that reaches average consensus and does not require \textit{doubly} stochastic matrices, as opposed to ordinary average consensus. Here, we assume each node knows its out-degree at every iteration. Under this assumption, it turns out that average consensus is possible and may be accomplished using the following iteration,
\begin{align}\label{eq:push-sum}
x_{i}^{k+1}&=\sum_{j\in N_{i}^{-,k}}\frac{x_j^k}{d_j^{+,k}},\nonumber\\
y_{i}^{k+1}&=\sum_{j\in N_{i}^{-,k}}\frac{y_j^k}{d_j^{+,k}},\\
z_i^{k+1}&=\frac{x_i^{k+1}}{y_i^{k+1}},\nonumber
\end{align}
where the auxiliary variables $y_i$ are initialized as $y_i^0=1$ and are collected in a column vector $\textbf{y}$.
This iteration is implemented in a distributed way using two steps. First each node $i$ broadcasts $x_i^k/d_i^{+,k}$ to its out-neighbors. Next, every node sets $x_i^{k+1}$ to be the sum of the incoming messages. Variables $y_i^k$ follow the same evolution. $z_i^k$ may be thought of as node $i$'s estimation of the average.

We define $\textbf{W}^k$ to be the matrix such that iteration (\ref{eq:push-sum}) may be written as,
\begin{align}
\textbf{x}^{k+1}&=\textbf{W}^k\textbf{x}^k,\nonumber\\
\textbf{y}^{k+1}&=\textbf{W}^k\textbf{y}^k.\nonumber
\end{align}

Next, we will state and prove a proposition regarding the sufficient conditions for the push-sum algorithm to converge.

\begin{proposition}\label{pro:push-sum}
Suppose the sequence of graphs $\mathcal{G}_{\textbf{W}^0},\mathcal{G}_{\textbf{W}^1},\mathcal{G}_{\textbf{W}^2},\ldots,$ satisfies Assumption \ref{assump:B_k-connectivity}. If there exist some $K\geq1,T\geq0,$ such that $\lambda_k\leq\frac{\ln(k+T)}{2\ln(n)}$ for all $k\geq K$, by implementing the push-sum algorithm (\ref{eq:push-sum}), it follows
\begin{equation*}
\lim_{k\to\infty}z_i^k=\frac{\sum_{j=1}^{n}x_j^0}{n}.
\end{equation*}
\end{proposition}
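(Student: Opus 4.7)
I follow the same skeleton as Proposition~\ref{pro:B_k-convergence}, but adapt it to column-stochastic matrices and the ratio variables $z_i^k$. Since $d_j^{+,k}\le n$, every positive entry of $\textbf{W}^k$ is at least $1/n$, so the graphs $\mathcal{G}_{[\textbf{W}^k]_{1/n}}$ coincide with $\mathcal{G}_{\textbf{W}^k}$ and satisfy Assumption~\ref{assump:B_k-connectivity}. Lemma~\ref{lem:strictly positive matrix} then gives $[\textbf{W}^{\mu_{kn}-1:\mu_{(k-1)n}}]_{ij}\ge n^{-\lambda_k}$ for all $i,j$. Put $\bar{x}=\frac{1}{n}\sum_j x_j^0$ and define the error vector $\textbf{u}^k:=\textbf{x}^k-\bar{x}\,\textbf{y}^k$. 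Column-stochasticity preserves $\textbf{1}^T\textbf{x}^k=n\bar{x}$ and $\textbf{1}^T\textbf{y}^k=n$, so $\textbf{1}^T\textbf{u}^k=0$ for every $k$, while $\textbf{u}^{k+1}=\textbf{W}^k\textbf{u}^k$ and $z_i^k-\bar{x}=u_i^k/y_i^k$. It therefore suffices to contract $\|\textbf{u}^k\|_1$ and to lower bound the $y_i^k$.

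The key technical step is a column-stochastic analog of Lemma~\ref{lem:max-min inequality}: if $\textbf{C}$ is column stochastic with every entry at least $\beta>0$ and $\textbf{1}^T\textbf{u}=0$, then $\|\textbf{C}\textbf{u}\|_1\le(1-n\beta)\|\textbf{u}\|_1$. To see this, split $\textbf{u}=\textbf{u}^+-\textbf{u}^-$ into its non-negative parts, which have equal $1$-norm $\|\textbf{u}\|_1/2$; by the entry bound, both $(\textbf{C}\textbf{u}^+)_i$ and $(\textbf{C}\textbf{u}^-)_i$ are at least $\beta\|\textbf{u}\|_1/2$. Using $|a-b|=a+b-2\min(a,b)$ componentwise and the fact that column-stochasticity preserves the $1$-norm of non-negative vectors, we obtain the claim. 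Applying it to the block product with $\beta=n^{-\lambda_k}$ and telescoping gives
\begin{equation*}
\|\textbf{u}^{\mu_{ln}}\|_1\le\|\textbf{u}^0\|_1\prod_{k=1}^{l}\left(1-n^{1-\lambda_k}\right).
\end{equation*}
For the denominator, since $\sum_j y_j^{\mu_{(k-1)n}}=n$ and every entry of the block product is at least $n^{-\lambda_k}$, one gets $y_i^{\mu_{kn}}\ge n^{1-\lambda_k}$. Combining,
\begin{equation*}
\max_i|z_i^{\mu_{ln}}-\bar{x}|\le\frac{\|\textbf{u}^0\|_1}{n^{1-\lambda_l}}\prod_{k=1}^{l}\left(1-n^{1-\lambda_k}\right).
\end{equation*}

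The hypothesis $\lambda_k\le\ln(k+T)/(2\ln n)$ gives $n^{1-\lambda_k}\ge n/\sqrt{k+T}$, so $\sum_k n^{1-\lambda_k}=\infty$ by comparison with $\sum 1/\sqrt{k}$, and Lemma~\ref{lem:alpha} forces the product to vanish; a coarse estimate in fact shows exponential-in-$\sqrt{l}$ decay, which dominates the $O(\sqrt{l+T})$ growth of $1/n^{1-\lambda_l}$, so $\max_i|z_i^{\mu_{ln}}-\bar{x}|\to 0$ along the subsequence $k=\mu_{ln}$. To upgrade to convergence for all $k$, rewrite the iteration as $z_i^{k+1}=(\sum_j W_{ij}^k y_j^k z_j^k)/(\sum_j W_{ij}^k y_j^k)$, a convex combination of the $z_j^k$'s (the $y_j^k$ stay strictly positive because of self-loops); hence $\max_i z_i^k-\min_i z_i^k$ is non-increasing in $k$. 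Since $\sum_i y_i^k z_i^k=n\bar{x}$ and $\sum_i y_i^k=n$, $\bar{x}$ is a weighted average of the $z_i^k$, so $|z_i^k-\bar{x}|\le\max_j z_j^k-\min_j z_j^k$, and the latter tends to zero by the subsequence estimate, completing the proof. The main obstacle is the column-stochastic contraction lemma: the existing Lemma~\ref{lem:max-min inequality} contracts the max-minus-min seminorm under row-stochastic matrices, whereas push-sum needs the dual statement --- a $1$-norm contraction on the zero-sum subspace under column-stochastic matrices --- with the positive/negative splitting above being the new ingredient that makes it work.
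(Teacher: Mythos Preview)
Your proof is correct and takes a genuinely different route from the paper's. The paper converts the column-stochastic dynamics into a row-stochastic one by the similarity transform $\textbf{P}^k=(\textbf{Y}^{k+1})^{-1}\textbf{W}^k\textbf{Y}^k$, shows $\textbf{P}^k$ is row-stochastic, and then applies Lemma~\ref{lem:max-min inequality} directly to $\textbf{z}^k$; the entry lower bound on the block product $\textbf{P}^{\mu_{kn}-1:\mu_{(k-1)n}}$ picks up a factor from both $\textbf{Y}^{\mu_{kn}}$ and $\textbf{Y}^{\mu_{(k-1)n}}$, which is why the paper's contraction constant is $(1/n)^{\lambda_k+\lambda_{k-1}}$ and why the hypothesis has the factor $2$ in the denominator. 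You instead prove the dual statement --- an $\ell_1$-contraction on the zero-sum subspace for column-stochastic matrices with a uniform entry lower bound --- and apply it to $\textbf{u}^k=\textbf{x}^k-\bar{x}\textbf{y}^k$, postponing the $y$-lower-bound to the very end where it only enters once. This buys you a cleaner contraction factor $n^{1-\lambda_k}$ (depending on a single block rather than two consecutive ones) and hence a faster-diverging series $\sum n/\sqrt{k+T}$ in place of the paper's $\sum n/(k+T)$; the price is the extra step of controlling the ratio $\|\textbf{u}^{\mu_{ln}}\|_1/n^{1-\lambda_l}$ and the separate monotonicity argument to pass from the subsequence $\mu_{ln}$ to all $k$. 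Both arguments finish by identifying the limit via mass preservation. Your $\ell_1$-contraction lemma (the positive/negative split together with $|a-b|=a+b-2\min(a,b)$) is the natural column-stochastic analogue of Lemma~\ref{lem:max-min inequality} and is worth recording on its own.
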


Note that positive elements of $\textbf{W}^k$ are at least $1/d_{\max}^{+,k}\geq 1/n$. Moreover, $\textbf{W}^k$ is column stochastic, i.e.,
\begin{displaymath}
\textbf{1}^T\textbf{W}^k=\textbf{1}^T.
\end{displaymath}
consequently, the sum of $x^k$ and $y^k$ are preserved, i.e., 
\begin{align}
\sum_{i=1}^{n}x_i^k&=\sum_{i=1}^{n}x_i^0,\label{eq:sum x}\\
\sum_{i=1}^{n}y_i^k&=\sum_{i=1}^{n}y_i^0=n.\label{eq:sum y=n}
\end{align}

Before proving the proposition, we need the following lemma, which establishes bounds for $y_i^{\mu_{ln}}$.

\begin{lemma}\label{lemma:yk bound}
Suppose the Assumptions stated in Proposition \ref{pro:push-sum} are satisfied. The following bounds on $y_i^{\mu_{ln}}$ hold for any $l\geq 1$:
\begin{equation}\label{eq:yk bound}
\left(\frac{1}{n}\right)^{\lambda_l-1}\leq y_i^{\mu_{ln}}\leq n.
\end{equation}
\end{lemma}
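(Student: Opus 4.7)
The plan is to establish the two bounds separately. The upper bound is immediate from mass conservation: since $\textbf{W}^k$ is column stochastic and $y_i^0 = 1$, equation \eqref{eq:sum y=n} gives $\sum_i y_i^{\mu_{ln}} = n$, and non-negativity of the entries of $\textbf{y}^{\mu_{ln}}$ then forces $y_i^{\mu_{ln}} \leq n$ for every $i$.

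For the lower bound, the idea is to re-use the proof strategy of Lemma \ref{lem:strictly positive matrix} applied to the $\textbf{W}$-matrices. The key observation is that the positive entries of each $\textbf{W}^k$ are at least $1/d_{\max}^{+,k} \geq 1/n$, so no thresholding is needed and we may apply the reasoning of Lemma \ref{lem:strictly positive matrix} directly with $\alpha = 1/n$. Since by hypothesis the sequence $\mathcal{G}_{\textbf{W}^0},\mathcal{G}_{\textbf{W}^1},\ldots$ satisfies Assumption \ref{assump:B_k-connectivity} (in particular every graph has self-loops at every node), the union-of-$n$-consecutive-blocks argument from Lemma \ref{lem:strictly positive matrix} yields that $\textbf{W}^{\mu_{ln}-1:\mu_{(l-1)n}}$ is a strictly positive matrix with every entry at least $(1/n)^{\mu_{ln}-\mu_{(l-1)n}} = (1/n)^{\lambda_l}$.

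Writing $\textbf{y}^{\mu_{ln}} = \textbf{W}^{\mu_{ln}-1:\mu_{(l-1)n}}\,\textbf{y}^{\mu_{(l-1)n}}$ and applying the lower bound on the matrix entries gives
\begin{equation*}
y_i^{\mu_{ln}} \;=\; \sum_{j=1}^{n}\bigl[\textbf{W}^{\mu_{ln}-1:\mu_{(l-1)n}}\bigr]_{ij}\,y_j^{\mu_{(l-1)n}} \;\geq\; \left(\tfrac{1}{n}\right)^{\lambda_l}\sum_{j=1}^{n} y_j^{\mu_{(l-1)n}} \;=\; n\cdot\left(\tfrac{1}{n}\right)^{\lambda_l} \;=\; \left(\tfrac{1}{n}\right)^{\lambda_l-1},
\end{equation*}
where the last equality uses conservation of the $y$-sum from \eqref{eq:sum y=n}.

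The only mild obstacle is making sure the hypotheses for the analog of Lemma \ref{lem:strictly positive matrix} are genuinely in place here, namely that the positive entries are bounded below by a uniform constant and that self-loops are always present; both hold automatically for push-sum under the stated assumption, so the proof is essentially a two-step bookkeeping argument rather than requiring any new ingredient.
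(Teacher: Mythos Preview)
Your proof is correct and follows essentially the same approach as the paper: both invoke Lemma~\ref{lem:strictly positive matrix} with $\alpha=1/n$ to get the entrywise lower bound $(1/n)^{\lambda_l}$ on $\textbf{W}^{\mu_{ln}-1:\mu_{(l-1)n}}$, and both deduce the upper bound from \eqref{eq:sum y=n}. The only cosmetic difference is that the paper writes $\textbf{y}^{\mu_{ln}}=\textbf{W}^{\mu_{ln}-1:0}\textbf{1}$ and argues that the full product retains the $(1/n)^{\lambda_l}$ entry bound (via column-stochasticity of the tail factor), whereas you stop one super-block earlier and invoke $\sum_j y_j^{\mu_{(l-1)n}}=n$ directly; these two steps are equivalent.
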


\begin{proof}
We observe that for $l\geq 1$,
\begin{equation}\label{eq:4}
\textbf{y}^{\mu_{ln}}=\textbf{W}^{\mu_{ln}-1:0}\textbf{1}.
\end{equation}
By Lemma \ref{lem:strictly positive matrix}, the matrix $\textbf{W}^{\mu_{ln}-1:\mu_{(l-1)n}}$ is strictly positive with it's elements at least $(1/n)^{\lambda_l}$. Hence $\textbf{W}^{\mu_{ln}-1:0}$ is the product of a strictly positive column stochastic matrix and other column stochastic matrices; consequently each of its entries are at least $(1/n)^{\lambda_l}$. Using \eqref{eq:4} we derive the left part of \eqref{eq:yk bound}.

Since $y_j^k>0$ for all $j$ and $k$, using \eqref{eq:sum y=n}, the right part of \eqref{eq:yk bound} is concluded.
\end{proof}

Now we can proceed with the proof of Proposition \ref{pro:push-sum}.

\begin{proof}[Proof of Proposition \ref{pro:push-sum}]
We start by rewriting the evolution of $\textbf{z}^k$ in a matrix form. The method to accomplish this is based on an observation from \cite{seneta2006non}. Using \eqref{eq:push-sum}, we have $x_i^k=z_i^ky_i^k$ and therefore,
\begin{equation}
z_i^{k+1}y_i^{k+1}=\sum_{j=1}^{n} W^k_{ij}z_j^ky_j^k,\nonumber
\end{equation}
or
\begin{equation}\label{eq:z=ywy}
z_i^{k+1}=\sum_{j=1}^n\left(y_i^{k+1}\right)^{-1}W^k_{ij}z_j^ky_j^k,
\end{equation}
where in the last step we used the fact that $y_i^k\neq0$, which is true by Lemma \ref{lemma:yk bound}. Define,
\begin{equation}\label{eq:Pk definition}
\textbf{P}^k=\left(\textbf{Y}^{k+1}\right)^{-1}\textbf{W}^k\textbf{Y}^k,
\end{equation}
where $\textbf{Y}^k={\rm diag}\left(\textbf{y}^k\right)$.
Using \eqref{eq:z=ywy} we have,
\begin{equation*}
\textbf{z}^{k+1}=\textbf{P}^k\textbf{z}^k.
\end{equation*}
Moreover, $\textbf{P}^k$ is stochastic:
\begin{align*}
\textbf{\textbf{P}}^k\textbf{1}&=\left(\textbf{Y}^{k+1}\right)^{-1}\textbf{W}^k\textbf{Y}^k\textbf{1}=\left(\textbf{Y}^{k+1}\right)^{-1}\textbf{W}^k\textbf{y}^k \\
&=\left(\textbf{Y}^{k+1}\right)^{-1}\textbf{y}^{k+1}{}={}\textbf{1}.
\end{align*}
Using \eqref{eq:Pk definition}, we obtain
\begin{equation}\label{eq:P=YWY}
\textbf{P}^{\mu_{kn}-1:\mu_{(k-1)n}}=\left(\textbf{Y}^{\mu_{kn}}\right)^{-1}\textbf{W}^{\mu_{kn}-1:\mu_{(k-1)n}}\textbf{Y}^{\mu_{(k-1)n}}.
\end{equation}
By Lemma \ref{lem:strictly positive matrix} the matrix $\textbf{W}^{\mu_{kn}-1:\mu_{(k-1)n}}$ is strictly positive; therefore using \eqref{eq:yk bound} and \eqref{eq:P=YWY} , $\textbf{P}^{\mu_{kn}-1:\mu_{(k-1)n}}$ is a strictly positive matrix with its elements at least
\begin{equation*}
\alpha_k=\frac{1}{n}\left(\frac{1}{n}\right)^{\lambda_k}\left(\frac{1}{n}\right)^{\lambda_{k-1}-1}=\left(\frac{1}{n}\right)^{\lambda_{k}+\lambda_{k-1}}.
\end{equation*}
Using Lemma \ref{lem:max-min inequality} we obtain,
\begin{equation}
z_{\max}^{\mu_{kn}}-z_{\min}^{\mu_{kn}}\leq (1-n\alpha _k)\left(z_{\max}^{\mu_{(k-1)n}}-z_{\min}^{\mu_{(k-1)n}}\right),\nonumber
\end{equation}
and consequently,
\begin{equation}\label{eq:max-min push-sum}
z_{\max}^{\mu_{ln}}-z_{\min}^{\mu_{ln}}\leq \prod_{k=1}^l(1-n\alpha _k)\left(z_{\max}^{0}-z_{\min}^{0}\right).
\end{equation}
Moreover,
\begin{align*}
\sum_{k=1}^{\infty}n\alpha_k \geq \sum_{k=K}^{\infty}n\alpha_k &=  \sum_{k=K}^{\infty}n\left(\frac{1}{n}\right)^{\lambda_{k}+\lambda_{k-1}}\\
&\geq \sum_{k=K}^{\infty}n\left(\frac{1}{n}\right)^{\frac{\ln(k+T)}{2\ln(n)}+\frac{\ln(k-1+T)}{2\ln(n)}}\\
&\geq \sum_{k=K}^{\infty}n\left(\frac{1}{n}\right)^{\frac{\ln(k+T)}{\ln(n)}}\\
&= \sum_{k=K}^{\infty}\frac{n}{k+T}=\infty.
\end{align*}
Hence using Lemma \ref{lem:alpha} and \eqref{eq:max-min push-sum}, $z_{\max}^{\mu_{ln}}-z_{\min}^{\mu_{ln}}$ converges to zero as $l\to \infty$. By Lemma \ref{lem:stochastic matrices} we conclude that $\lim_{k\to \infty}z_i^k$ exists and we denote it by $z_{\infty}$.
We have,
\begin{align*}
z_\infty &= z_\infty \lim_{k\to \infty} \left(\frac{\sum_{i=1}^n y_i^k}{\sum_{i=1}^n y_i^k}\right)\\
&= \lim_{k\to \infty}\left(\frac{\sum_{i=1}^n z_i^ky_i^k}{n}+\frac{\sum_{i=1}^n (z_\infty -z_i^k)y_i^k}{n}\right)\nonumber\\
&= \frac{\sum_{i=1}^n x_i^k	}{n}+\lim_{k\to \infty}\left(\frac{\sum_{i=1}^n (z_\infty -z_i^k)y_i^k}{n}\right)\\
&= \frac{\sum_{i=1}^n x_i^0}{n},
\end{align*}
where the last equality holds due to the sum preservation property, \eqref{eq:sum x}.
\end{proof}

\section{Robust Asynchronous Push-Sum}\label{sec:ra_ac}
Here we describe and study another algorithm for average consensus, in which the communication system is asynchronous and unreliable. In an unreliable setting, communication links might fail to transmit data packets and information might get lost.

This algorithm is originally inspired by the algorithm  proposed by \cite{Vaidya}, but under asynchronous communication. As the algorithm in \cite{Vaidya}, this algorithm is also based on the push-sum consensus. \cite{Schenato} has proved exponential convergence of this algorithm for the case when at each iteration only one node wakes up and transmits. Here we modify the algorithm presented by \cite{Schenato} and show that average consensus still holds while allowing for any subset of nodes to perform updates at each iteration.

In this algorithm, as opposed to the previous ones, we assume nodes do not have self-loops.

\begin{algorithm}
\caption{Robust Asynchronous Push-Sum}
\begin{algorithmic}[1]
\STATE Initialize the algorithm with $\textbf{y}^0=\textbf{1}$, $\sigma_i^{x,0}=\sigma_i^{y,0}=0$ $\forall i\in\{1,\ldots,n\}$ and $\rho_{ji}^{x,0}=\rho_{ji}^{y,0}=0$, $\forall (i,j)\in \mathcal{E}$.
\STATE At every iteration $k$, for every node $i$:
\IF{node $i$ wakes up}
\STATE $\sigma_i^{x,k+1}=\sigma_i^{x,k}+\frac{x_i^k}{d_i^{+}+1}$;
\STATE $\sigma_i^{y,k+1}=\sigma_i^{y,k}+\frac{y_i^k}{d_i^{+}+1}$;
\STATE $x_i^{k+1}=\frac{x_i^k}{d_i^{+}+1}$;
\STATE $y_i^{k+1}=\frac{y_i^k}{d_i^{+}+1}$;
\STATE Node $i$ broadcasts $\sigma_i^{x,k+1}$ and $\sigma_i^{y,k+1}$ to its out-neighbors: $N_i^+$
\ENDIF
\IF{node $i$ receives $\sigma_j^{x,k+1}$ and $\sigma_j^{y,k+1}$ from $j\in N_i^-$}
\STATE $\rho_{ij}^{x,k+1}=\sigma_j^{x,k+1}$;
\STATE $\rho_{ij}^{y,k+1}=\sigma_j^{y,k+1}$;
\STATE $x_i^{k+1}=x_i^{k+1}+\rho_{ij}^{x,k+1}-\rho_{ij}^{x,k}$;
\STATE $y_i^{k+1}=y_i^{k+1}+\rho_{ij}^{y,k+1}-\rho_{ij}^{y,k}$;
\ENDIF
\STATE Other variables remain unchanged.

\end{algorithmic}
\end{algorithm}

The impressive idea proposed by \cite{Vaidya} that allows us to overcome the issue of unreliable of links, is that of introducing the counters: in particular each node $i$ has a counter $\sigma_i^{x,k}$ ($\sigma_i^{y,k}$ respectively) to keep track of the total $x$-mass ($y$-mass) sent by itself to its neighbors from time 0 to time $k$, and counters $\rho_{ij}^{x,k}$ ($\rho_{ij}^{y,k}$ respectively) $\forall j\in N_i^-$, to take into account the total $x$-mass ($y$-mass) received from its neighbor $j$ from time 0 to time $k$.

While in reality, nodes will perform computations when they wake up; to make the analysis easier, we assume nodes perform computations (but no transmission) when they are not awake.

Next, we state and prove the main theorem of this paper, which shows that the algorithm above reaches average consensus under sufficient connectivity assumptions.

\begin{theorem}\label{theorem:robust push-sum}
Suppose we apply the Robust Asynchronous Push-Sum algorithm to a set of agents communicating with each other through a strongly connected graph $\mathcal{G=(N,E)}$, where $\mathcal{E}$ does not have self-loops. Let $\mathcal{G}^0,\mathcal{G}^1,\ldots,$ be the sequence of graphs $\mathcal{G}^i=(\mathcal{N},\mathcal{E}^i)$, $\mathcal{E}^i\subset\mathcal{E}$, containing only the links which transmit successfully at iteration $i$. Also, suppose there is another sequence $b_1,b_2,\ldots,$ of positive integers such that, if we split the sequence of $\mathcal{G}^0,\mathcal{G}^1,\ldots,$ to consecutive blocks of length $b_i$, the union of graphs of each block is equal to $\mathcal{G}$; i.e., $\cup_{i=\mu_k}^{\mu_{k+1}-1}\mathcal{E}^i=\mathcal{E}, \forall k\geq 0$, 
where $\mu_k$ and $\lambda_k$ are defined in (\ref{eq:mu}) and (\ref{eq:lambda}). Suppose that there exists some $K\geq1,T\geq0,$ such that $\lambda_k\leq \frac{\ln(k+T)}{6\ln(n)}$, $\forall k\geq K$.
Then, $z_i^k=x_i^k/y_i^k$ converges to the average of $\textbf{x}^0$, i.e.,
\begin{equation}
\lim_{k\to\infty}z_i^k=\frac{\sum_{j=1}^{n}x_j^0}{n}.\nonumber
\end{equation}
\end{theorem}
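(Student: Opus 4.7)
The plan is to reduce the Robust Asynchronous Push-Sum to the column-stochastic, linear framework of Proposition \ref{pro:push-sum} by lifting the dynamics to an augmented state that explicitly tracks the mass in flight on every directed edge. For each edge from $j$ to $i$ in $\mathcal{E}$, introduce the in-flight coordinates $\tau_{ji}^{x,k}=\sigma_j^{x,k}-\rho_{ij}^{x,k}$ and $\tau_{ji}^{y,k}=\sigma_j^{y,k}-\rho_{ij}^{y,k}$, and stack them with the node variables to form $\hat{\mathbf{x}}^k,\hat{\mathbf{y}}^k\in\mathbb{R}^{n+|\mathcal{E}|}$. A case analysis of the wake-up and receive rules (with and without link success) shows that $\hat{\mathbf{x}}^{k+1}=\hat{\mathbf{W}}^k\hat{\mathbf{x}}^k$ and $\hat{\mathbf{y}}^{k+1}=\hat{\mathbf{W}}^k\hat{\mathbf{y}}^k$ for a common column-stochastic matrix $\hat{\mathbf{W}}^k$ whose positive entries are all at least $1/(d^{+}_{\max}+1)\ge 1/n$. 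This yields the conservation laws $\mathbf{1}^T\hat{\mathbf{x}}^k=\sum_j x_j^0$ and $\mathbf{1}^T\hat{\mathbf{y}}^k=n$.

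Next I would extend Lemma \ref{lem:strictly positive matrix} and Lemma \ref{lemma:yk bound} to the augmented system. The block-union hypothesis means each block realizes every edge at least once, so every node wakes up in every block and every mass deposited in an edge buffer is delivered to its destination within the same block. Together with strong connectivity of $\mathcal{G}$, a reach-set argument (treating an edge coordinate as a one-step relay onto its destination node) shows that the $n$ node rows of $\hat{\mathbf{W}}^{\mu_{ln}-1:\mu_{(l-1)n}}$ are strictly positive with every entry bounded below by $(1/n)^{\lambda_l}$. Applying this to $\hat{\mathbf{y}}^{\mu_{(l-1)n}}$ and using $\mathbf{1}^T\hat{\mathbf{y}}^{\mu_{(l-1)n}}=n$ yields $y_i^{\mu_{ln}}\ge(1/n)^{\lambda_l-1}$, while $y_i^{\mu_{ln}}\le n$ follows from positivity and sum conservation.

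With these analogues in hand, I would mimic the proof of Proposition \ref{pro:push-sum}: form $\hat{\mathbf{P}}^k=(\hat{\mathbf{Y}}^{k+1})^{-1}\hat{\mathbf{W}}^k\hat{\mathbf{Y}}^k$ on the currently active coordinates, and combine the three bounds above to obtain a lower bound $\alpha_l\ge(1/n)^{C(\lambda_l+\lambda_{l-1})}$ on the node-row entries of $\hat{\mathbf{P}}^{\mu_{ln}-1:\mu_{(l-1)n}}$, with $C$ an absolute constant. Lemmas \ref{lem:max-min inequality} and \ref{lem:stochastic matrices} then contract $z_{\max}^{\mu_{ln}}-z_{\min}^{\mu_{ln}}$ by a factor of $1-(n+|\mathcal{E}|)\alpha_l$ at each block boundary; Lemma \ref{lem:alpha} drives this difference to zero once $\sum_l(n+|\mathcal{E}|)\alpha_l=\infty$, which (using $|\mathcal{E}|\le n^2$) is guaranteed by $\lambda_l\le\ln(l+T)/(6\ln n)$ since the tail is then bounded below by a constant multiple of $\sum_l(l+T)^{-\gamma}$ for some $\gamma<1$. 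The common limit is identified as $\sum_j x_j^0/n$ by substituting $\hat{\mathbf{x}}^k\approx z_\infty\hat{\mathbf{y}}^k$ into the two conservation laws, exactly as in the final chain of equalities in the proof of Proposition \ref{pro:push-sum}.

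The principal obstacle is that $\hat{\mathbf{Y}}^k$ is genuinely singular on edge coordinates: $\tau_{ji}^{y,k}$ starts at $0$ and returns to $0$ whenever the link from $j$ to $i$ transmits successfully, so $\hat{\mathbf{P}}^k$ is not defined verbatim there. The resolution rests on the observation that $\tau_{ji}^{y,k}=0$ forces $\tau_{ji}^{x,k}=0$ as well: both are linear functionals of the same row of $\hat{\mathbf{W}}^{k-1:0}$ applied to $\hat{\mathbf{y}}^0=(\mathbf{1};\mathbf{0})$ and $\hat{\mathbf{x}}^0=(\mathbf{x}^0;\mathbf{0})$, whose supports coincide on the node columns. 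Vanishing coordinates therefore contribute $0$ to both conservation sums and can be dropped from the $\hat{\mathbf{P}}^k$ formalism without affecting the argument; the max--min contraction then proceeds on the active subspace, and the node lower bound derived above guarantees that the node coordinates --- the only ones needed to read off $z_i^k$ --- are always active.
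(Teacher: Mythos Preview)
Your augmentation with in-flight edge coordinates, the column-stochasticity of $\hat{\mathbf W}^k$, the conservation laws, the observation that the node rows of a block product are strictly positive, and the handling of vanishing edge coordinates are all essentially what the paper does. The genuine gap is in the contraction step.

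You propose to invoke Lemma~\ref{lem:max-min inequality} on $\hat{\mathbf P}^{\mu_{ln}-1:\mu_{(l-1)n}}$ (restricted to active coordinates) to contract the spread by $1-(n+|\mathcal E|)\alpha_l$. But Lemma~\ref{lem:max-min inequality} requires \emph{every} row to have all entries bounded below by $\alpha_l$, and the buffer rows do not satisfy this: after a block, an active buffer row has only a \emph{single} guaranteed positive entry (in the column of its source node), not a full positive row. Consequently you cannot contract the full augmented spread $s^l=\max\{\bar z^l_{\max},\bar r^l_{\max}\}-\min\{\bar z^l_{\min},\bar r^l_{\min}\}$ directly. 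If instead you only contract the node spread $t^l=\bar z^l_{\max}-\bar z^l_{\min}$ using the strictly positive node rows, the bound you obtain is $t^{l+1}\le(1-n\beta_l)s^l$, with $s^l$ on the right, so nothing telescopes.

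The paper closes this gap with a two-scale recursion: from the single positive entry in each active buffer row one gets $s^{l+1}\le \beta_l t^l+(1-\beta_l)s^l$, and from the strictly positive node rows one gets $t^{l+1}\le(1-n\beta_l)s^l$; substituting the second into the first and using $s^l\le s^{l-1}$ yields $s^{l+1}\le(1-n\beta_l\beta_{l-1})s^{l-1}$. This coupling of two consecutive blocks is precisely what produces the product $\beta_{2k}\beta_{2k-1}$ in the divergence criterion and, together with the three-$\lambda$ lower bound $\beta_l=(1/n)^{\lambda_{l+1}+\lambda_l+\lambda_{l-1}+1}$ (which in turn requires a lower bound on the \emph{active buffer} coordinates $\bar v_h^l\ge(1/n)^{\lambda_l+\lambda_{l-1}}$ that your sketch omits), is where the constant $6$ in the hypothesis $\lambda_k\le\ln(k+T)/(6\ln n)$ comes from. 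Your proposal leaves both the buffer-$y$ lower bound and the two-step contraction unaddressed, and the vague constant $C$ together with ``some $\gamma<1$'' does not recover the stated growth condition.
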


\begin{proof}
Similar to the proofs of the previous propositions, here we first rewrite the evolution of $\textbf{x}^k$ and $\textbf{y}^k$ in a matrix form. We show these matrices are column stochastic. Then we write the evolution of the agents' estimate of the average, $\textbf{z}^k$, in matrix form. Finally, we exploit the properties of these matrices to show the convergence of $z_i^k$ to one limit which turns out to be the average.

Before we rewrite the iteration in a matrix form, we introduce the indicator variables $\tau_i^k$, for $i=1,2,\ldots,n$, and $\tau_{ij}^k$, for $(i,j)\in \mathcal{E}$. $\tau_i^k$ is equal to 1 if node $i$ wakes up at time $k$, and is 0 otherwise. Likewise $\tau_{ij}^k$ is 1 whenever node $i$ wakes up at time $k$, $j\in N_i^{+}$ and the edge $(i,j)$ is reliable, while it is 0 otherwise.

Let us introduce the following variables:
\begin{align*}
u_{ij}^k&=\sigma_i^{x,k}-\rho_{ji}^{x,k},\qquad \forall(i,j)\in \mathcal{E}, \\
v_{ij}^k&=\sigma_i^{y,k}-\rho_{ji}^{y,k},\qquad \forall(i,j)\in \mathcal{E},
\end{align*}
which are, intuitively, the total $x$-mass and $y$-mass, respectively, that has been sent by node $i$ but due to link failures has not been delivered to node $j$ yet.
The evolution of $y$-mass is exactly the same as $x$-mass; hence to avoid repetition, we only analyze the evolution of $\textbf{x}^k$ and $u_{ij}^k$.
We can write the update equations:
\begin{gather}
u_{ij}^{k+1}=\left(1-\tau_i^k\tau_{ij}^k\right)\left(u_{ij}^k+\tau_i^k\frac{x_i^k}{d_i^{+}+1}\right),\label{eq:update1}\\
x_i^{k+1} = \sum_{j \in N_i^-}\left(\frac{x_j^k}{d_j^{+}+1}+u_{ji}^k\right)\tau_j^k\tau_{ji}^k
+ x_i^k\left(1-\tau_i^k+\frac{\tau_i^k}{d_i^{+}+1}\right).\label{eq:update2}
\end{gather}

Let us introduce the column vectors $\textbf{u}^k$ and $\textbf{v}^k$ which collect all different $u_{ij}^k$ and $v_{ij}^k$, respectively. Moreover, let us introduce the column vectors $\bm{\phi}^{(x)}(k)=\left[ (\textbf{x}^k)^T,(\textbf{u}^k)^T\right]^T$, $\bm{\phi}^{(y)}(k)=\left[ (\textbf{y}^k)^T,(\textbf{v}^k)^T\right]^T\in \mathbb{R}^{n+m}$, where $m=\vert \mathcal{E}\vert$. Using \eqref{eq:update1} and \eqref{eq:update2} we can rewrite the algorithm in the following matrix form:
\begin{align}
\bm{\phi}^{(x)}(k+1)=\textbf{M}^k\bm{\phi}^{(x)}(k),\\
\bm{\phi}^{(y)}(k+1)=\textbf{M}^k\bm{\phi}^{(y)}(k).
\end{align}
\begin{lemma}\label{lem: M col stoc}
$\textbf{M}$ is column stochastic and each positive element of it is at least $1/(\max_i \{d_i^+\}+1)$. Also we have for $1\leq i \leq n$:
\begin{equation}
M_{ii}^k=\begin{cases}
1,&\text{if }\tau_i^k=0,\\
\frac{1}{d_i^++1},&\text{if }\tau_i^k=1.
\end{cases}
\end{equation}
\end{lemma}

\begin{proof}
Let us first consider the $i^{th}$ column of $\textbf{M}^k$, with $1\leq i \leq n$. The element $M_{ii}^k$ indicates how $x_i^k$ influences $x_i^{k+1}$. Using \eqref{eq:update2}, it follows:
\begin{equation}\label{eq:M_ii}
M_{ii}^k=1-\tau_i^k+\frac{\tau_i^k}{d_i^{+}+1}=\begin{cases}
1,&\text{if }\tau_i^k=0,\\
\frac{1}{d_i^++1},&\text{if }\tau_i^k=1.
\end{cases} 
\end{equation}
The element $M_{ji}^k$, $j \in \{1,\ldots,n\}\setminus\{i\}$ indicates how $x_i^k$ influences $x_j^{k+1}$. It holds,
\begin{equation}\label{eq:M_ji}
M_{ji}^k=\begin{cases}
\frac{\tau_i^k\tau_{ij}^k}{d_i^{+}+1},&\text{if }j \in N_i^+,\\
0,&\text{otherwise.}
\end{cases}
\end{equation}
Finally, if $h\in \{n+1,\ldots,n+m\}$ is such that $\bm{\phi}^{(x)}_h(k)=u_{rj}^k$; the element $M_{hi}^k$ indicates how $x_i^k$ influences $u_{rj}^{k+1}$, we have
\begin{equation}\label{eq:M_li}
M_{hi}^k=\begin{cases}
\frac{(1-\tau_{ij}^k)\tau_i^k}{d_i^{+}+1},&\text{if }r=i,\\
0,&\text{otherwise.}
\end{cases}
\end{equation}
Using \eqref{eq:M_ii}-\eqref{eq:M_li}, entries of $i^{th}$ column of $\textbf{M}^k$ sum to 1.

Now we consider the $h^{th}$ column of $\textbf{M}^k$, $h\in \{n+1,\ldots,n+m\}$. Suppose $\bm{\phi}^{(x)}_h(k)=u_{ij}^k$, we have
\begin{align}
M_{jh}^k&=\tau_i^k\tau_{ij}^k,\label{eq:M_jh}\\ 
M_{hh}^k&=1-\tau_i^k\tau_{ij}^k,\label{eq:M_hh}
\end{align}
and all the other elements of $h^{th}$ column are zero. Using \eqref{eq:M_jh} and \eqref{eq:M_hh}, the entries of the $h^{th}$ column sum to 1 and hence the matrix $\textbf{M}^k$ is column stochastic.
\end{proof}

Let us augment the graph $\mathcal{G}^k$ to $\mathcal{H}^k=\mathcal{G}_{\textbf{M}^k}$ by adding auxiliary nodes $b_{ij}$, $\forall(i,j)\in\mathcal{E}$. Note that by Lemma \ref{lem: M col stoc}, node $i\in\{1,\ldots,n\}$ has self-loop all the time and node $b_{ij}$ has self-loop unless the link $(i,j)$ transmits reliably. Let us call nodes $b_{ij}$ \textit{buffers} and assign values $u_{ij}^k$ and $v_{ij}^k$ to them.

The algorithm is equivalent to the following process: Suppose node $i$ wakes up. If the link $(i,j)$ works properly, node $i$ sends some mass ($x_i^k/(d_i^++1)$ and $y_i^k/(d_i^++1)$) to node $j$ and also node $b_{ij}$ sends all of its mass ($u_{ij}^k$ and $v_{ij}^k$) to node $j$ and becomes zero. Otherwise, the mass is sent from node $i$ to node $b_{ij}$ instead of $j$. Then all the mass gets accumulated at node $b_{ij}$ because of its self-loop, until the link $(i,j)$ transmits reliably.

\begin{lemma}\label{lemma:positive n-rows}
The first n rows of $\textbf{M}^{\mu_{l+n}-1:\mu_l}$ are strictly positive, $l\geq 0$. The positive elements of this matrix are at least $\left(1/n\right)^{\mu_{l+n}-\mu_l}$.
\end{lemma}

\begin{proof}
Observing $\mathcal{H}^k$, every node $j\in \{1,\ldots,n\}$ has self-loop in every iteration and buffer $b_{ij}$ has self-loop unless link $(i,j)$ transmits successfully. We also know that during period $\mu_k$ to $\mu_{k+1}-1$, $k=0,1,\ldots$, each edge $(i,j)\in \mathcal{E}$ transmits successfully at least once. Moreover, $\mathcal{G}$ is strongly connected; Hence at the end of period $\mu_l$ to $\mu_{l+n}-1$, every node $j\in \{1,\ldots,n\}$ is reachable from all the nodes in graph $\mathcal{H}$. Also, since each positive element of $\textbf{M}^k$ is at least $1/n$, each positive element of $\textbf{M}^{\mu_{l+n}-1:\mu_l}$ is at least $(1/n)^{\mu_{l+n}-\mu_l}$.
\end{proof}

Define $\textbf{W}^k=\textbf{M}^{\mu_{(k+1)n}-1:\mu_{kn}}, k\geq 0$, which has positive elements of at least $\alpha^{\lambda_{k+1}}$ where $\alpha=1/n$. Then we have:
\begin{gather}
\begin{bmatrix}
\textbf{x}^{\mu_{(k+1)n}}\\ \textbf{u}^{\mu_{(k+1)n}}
\end{bmatrix}=\textbf{W}^k
\begin{bmatrix}
\textbf{x}^{\mu_{kn}}\\ \textbf{u}^{\mu_{kn}}
\end{bmatrix},\label{eq:xu=wxu} \\
\begin{bmatrix}
\textbf{y}^{\mu_{(k+1)n}}\\ \textbf{v}^{\mu_{(k+1)n}}
\end{bmatrix}=\textbf{W}^k
\begin{bmatrix}
\textbf{y}^{\mu_{kn}}\\ \textbf{v}^{\mu_{kn}}
\end{bmatrix}.\label{eq:yv=wyv} 
\end{gather}

Let us split the matrix $\textbf{W}^k$ to four sub-matrices as follows:
\begin{equation}\label{eq:W=ABCD}
\textbf{W}^k=\begin{bmatrix}
\textbf{A}^k&\textbf{B}^k\\\textbf{C}^k&\textbf{D}^k
\end{bmatrix},
\end{equation}
where $\textbf{A}^k\in\mathbb{R}^{n\times n}$, $\textbf{B}^k\in\mathbb{R}^{n\times m}$, $\textbf{C}^k\in\mathbb{R}^{m \times n}$ and $\textbf{D}^k\in\mathbb{R}^{m \times m}$. By Lemma \ref{lemma:positive n-rows} we know that matrices $\textbf{A}^k$ and $\textbf{B}^k$ are strictly positive.

For $h=1,\ldots,m$ define $r_h^k$ as follows:
\begin{equation*}
r_h^k=\begin{cases}
\frac{u_h^k}{v_h^k},&\text{if }v_h^k\neq0,\\
0,&\text{if }v_h^k=0.
\end{cases}
\end{equation*}
\begin{lemma}
$u_{ij}^k=0$ whenever $v_{ij}^k=0$.
\end{lemma}

\begin{proof}
Since $\textbf{v}^0=\textbf{0}_m$ and $\textbf{y}^0=\textbf{1}_n$ and node $i$ has self loop in graph $\mathcal{H}^k$ for all $k\geq 0$, $y_i^k$ is always positive. If $v_{ij}^k=0$, the last time the node $i$ has woken up, the link $(i,j)$ has worked successfully, or $i$ has not woken up yet. In either case, node $b_{ij}$ has no remaining ($x$ and $y$) mass and $u_{ij}^k=0$ holds.
\end{proof}

Therefore, the following always holds for $h=1,\ldots,m$:
\begin{equation}\label{eq:u=rv}
u_h^k=r_h^kv_h^k,
\end{equation}

Define $\bar{\textbf{x}}^k=\textbf{x}^{\mu_{kn}}$, $\bar{\textbf{y}}^k=\textbf{y}^{\mu_{kn}}$, $\bar{\textbf{u}}^k=\textbf{u}^{\mu_{kn}}$, $\bar{\textbf{v}}^k=\textbf{v}^{\mu_{kn}}$, $\bar{\textbf{z}}^k=\textbf{z}^{\mu_{kn}}$ and $\bar{\textbf{r}}^k=\textbf{r}^{\mu_{kn}}$. Using (\ref{eq:xu=wxu}) and (\ref{eq:W=ABCD}) we obtain:
\begin{align*}
\bar{z}_i^{k+1}\bar{y}_i^{k+1}=\bar{x}_i^{k+1} &=\sum_{j=1}^n A_{ij}^k\bar{x}_j^k + \sum_{j=1}^{m}B_{ij}^k\bar{u}_j^k\\
&=\sum_{j=1}^n A_{ij}^k\bar{z}_j^k\bar{y}_j^k + \sum_{j=1}^{m} B_{ij}^k\bar{r}_j^k\bar{v}_j^k.\nonumber
\end{align*}
Hence,
\begin{align*}
\bar{z}_i^{k+1}&= \left(\bar{y}_i^{k+1}\right)^{-1}\sum_{j=1}^n A_{ij}^k\bar{z}_j^k\bar{y}_j^k + \left(\bar{y}_i^{k+1}\right)^{-1}\sum_{j=1}^{m} B_{ij}\bar{r}_j^k\bar{v}_j^k,\\
\bar{\textbf{z}}^{k+1}&=\left(\textbf{Y}^{k+1}\right)^{-1}\textbf{A}^k\textbf{Y}^k\bar{\textbf{z}}^k+
\left(\textbf{Y}^{k+1}\right)^{-1}\textbf{B}^k\textbf{V}^k\bar{\textbf{r}}^k,
\end{align*}
where $\textbf{Y}^k={\rm diag}\left(\bar{\textbf{y}}^{k}\right)$ and $\textbf{V}^k={\rm diag}\left(\bar{\textbf{v}}^{k}\right)$. Note that $\bar{\textbf{y}}^k$ is strictly positive. Similarly, using \eqref{eq:yv=wyv}-\eqref{eq:u=rv} we have,
\begin{align*}
\bar{r}_i^{k+1}\bar{v}_i^{k+1}=\bar{u}_i^{k+1}&=\sum_{j=1}^n C_{ij}^k\bar{x}_j^k + \sum_{j=1}^{m} D_{ij}^k\bar{u}_j^k\nonumber\\
&=\sum_{j=1}^n C_{ij}^k\bar{z}_j^k\bar{y}_j^k + \sum_{j=1}^{m} D_{ij}^k\bar{r}_j^k\bar{v}_j^k.
\end{align*}
Here $\bar{\textbf{v}}^k$, as opposed to $\bar{\textbf{y}}^k$, is not necessarily strictly positive. Therefore instead of $\left(\textbf{V}^k\right)^{-1}$, we define the following:
\begin{equation*}
\tilde{v}_i^k=\begin{cases}
\frac{1}{\bar{v}_i^k},&\text{if }\bar{v}_i^k\neq0,\\
0,&\text{if }\bar{v}_i^k=0.
\end{cases}
\end{equation*}
It follows:
\begin{align*}
\bar{r}_i^{k+1} &= \tilde{v}_i^{k+1}\sum_{j=1}^n C_{ij}^k\bar{z}_j^k\bar{y}_j^k + \tilde{v}_i^{k+1}\sum_{j=1}^{m} D_{ij}^k\bar{r}_j^k\bar{v}_j^k,\\
\bar{\textbf{r}}^{k+1}&=\tilde{\textbf{V}}^{k+1}\textbf{C}^k\textbf{Y}^k\bar{\textbf{z}}^k+
\tilde{\textbf{V}}^{k+1}\textbf{D}^k\textbf{V}^k\bar{\textbf{r}}^k.
\end{align*}
where $\tilde{\textbf{V}}^k={\rm diag}(\tilde{v}^k)$. Thus,
\begin{equation}\label{eq:zr=pzr}
\begin{bmatrix}
\bar{\textbf{z}}^{k+1}\\\bar{\textbf{r}}^{k+1}
\end{bmatrix}
=\textbf{P}^k
\begin{bmatrix}
\bar{\textbf{z}}^{k}\\\bar{\textbf{r}}^{k}
\end{bmatrix},
\end{equation}
where,
\begin{equation}\label{eq:P=YAY YBV}
\textbf{P}^k=\begin{bmatrix}
\left(\textbf{Y}^{k+1}\right)^{-1}\textbf{A}^k\textbf{Y}^k & \left(\textbf{Y}^{k+1}\right)^{-1}\textbf{B}^k\textbf{V}^k\\
\tilde{\textbf{V}}^{k+1}\textbf{C}^k\textbf{Y}^k&\tilde{\textbf{V}}^{k+1}\textbf{D}^k\textbf{V}^k
\end{bmatrix}.
\end{equation}
Now we show that the sum of the elements of each row $1$ to $n$ of $\textbf{P}^k$ is equal to 1, but for the rest of the rows they either sum to 1 or they are all zeros.
\begin{align*}
\textbf{P}^k\begin{bmatrix}
\mathbf{1}_n\\
\bold{1}_m
\end{bmatrix}=\begin{bmatrix}
\left(\textbf{Y}^{k+1}\right)^{-1}\left(\textbf{A}^k\bar{\textbf{y}}^k+\textbf{B}^k\bar{\textbf{v}}^k\right)\\
\tilde{\textbf{V}}^{k+1}\left(\textbf{C}^k\bar{\textbf{y}}^k+\textbf{D}^k\bar{\textbf{v}}^k\right)
\end{bmatrix}
=\begin{bmatrix}
\left(\textbf{Y}^{k+1}\right)^{-1}\bar{\textbf{y}}^{k+1}\\
\tilde{\textbf{V}}^{k+1}\bar{\textbf{v}}^{k+1}
\end{bmatrix}=\begin{bmatrix}
\bold{1}_n\\
\text{1 or 0}\\
\vdots\\
\text{1 or 0}
\end{bmatrix}.
\end{align*}
The $(n+h)^{th}$ row of $\textbf{P}^k$ is zero if and only if $v_h^{k+1}$ is zero.

\begin{lemma}
For $k\geq 0$ and $1\leq i\leq n$ we have:
\begin{equation}
\alpha^{\lambda_k} \leq \bar{y}_i^k \leq n. 
\end{equation}
Moreover, for $1\leq h\leq m$ and $k\geq 1$ we have either $\bar{v}_h^k=0$ or,
\begin{equation}
\alpha^{\lambda_k+\lambda_{k-1}}\leq \bar{v}_h^k \leq n.
\end{equation}
\end{lemma}

\begin{proof}
We have for $k\geq 1$,
\begin{equation}
\begin{bmatrix}
\bar{\textbf{y}}^k\\ \bar{\textbf{v}}^k
\end{bmatrix}=\textbf{W}^{k-1:0}
\begin{bmatrix}
\bold{1}_n\\\bold{0}_m
\end{bmatrix},\nonumber
\end{equation}
where $\textbf{W}^{k-1:0}$ is the product of $\textbf{W}^{k-1}$ and other column stochastic matrices. By Lemma \ref{lemma:positive n-rows}, $\textbf{W}^{k-1}$ has positive first $n$ rows and its positive entries are at least $\alpha^{\lambda_k}$. Hence $\textbf{W}^{k-1:0}$ has positive first $n$ rows and its positive elements are at least $\alpha^{\lambda_k}$. We obtain for $1\leq i\leq n$,
\begin{equation*}
\bar{y}_i^k\geq\alpha^{\lambda_k},\textit{ for } k\geq 1.
\end{equation*}
Also since $\lambda_0=0$, $\bar{y}_i^0=1=\alpha^{\lambda_0}$.

Suppose node $h$ is the buffer of link $(i,j)$. If $\bar{v}_h^k$ is positive for some $k\geq 0$, it is because the last time node $i$ has woken up, link $(i,j)$ has failed and node $i$ has sent some value to $h$. Hence $W_{hi}^{k-1}\geq \alpha^{\lambda_k}$, and it follows,
\begin{equation*}
\bar{v}_h^k\geq\alpha^{\lambda_k}\bar{y}_i^{k-1}\geq \alpha^{\lambda_k+\lambda_{k-1}}.
\end{equation*}
Also, due to some preservation property, we have $\bar{y}_i^k, \bar{v}_h^k\leq n$, for all $i, h$ and $k$.
\end{proof}

Now we are able to find a lower bound on positive elements of $\textbf{P}^k$.
Let us divide $\textbf{P}^k$ to four sub-matrices as:
\begin{equation*}
\textbf{P}^k=\begin{bmatrix}
\textbf{E}^k&\textbf{F}^k\\\textbf{G}^k&\textbf{H}^k
\end{bmatrix},
\end{equation*}
where $\textbf{E}^k\in\mathbb{R}^{n\times n}$, $\textbf{F}^k\in\mathbb{R}^{n\times m}$, $\textbf{G}^k\in\mathbb{R}^{m \times n}$ and $\textbf{H}^k\in\mathbb{R}^{m \times m}$ are defined as in (\ref{eq:P=YAY YBV}).\\

By construction, positive elements of $\textbf{E}^k$ and $\textbf{G}^k$ are at least $\frac{1}{n}\alpha^{\lambda_{k+1}}\alpha^{\lambda_{k}}=\alpha^{\lambda_{k+1}+\lambda_k+1}$. Similarly, positive elements of $\textbf{F}^k$ and $\textbf{H}^k$ are at least $\alpha^{\lambda_{k+1}+\lambda_k+\lambda_{k-1}+1}$. Hence we can define the following lower bound for all positive elements of $\textbf{P}^k$:
\begin{equation}\label{eq:Pk lower bound}
\beta_k=\alpha^{\lambda_{k+1}+\lambda_k+\lambda_{k-1}+1}.
\end{equation}

We note the following facts by observing \eqref{eq:P=YAY YBV}:\\
$\bullet$ $\textbf{E}^k$ is strictly positive.\\
$\bullet$ if $\bar{v}_h^k$ is positive, the $h^{th}$ column of $\textbf{F}^k$ is strictly positive. Otherwise the whole $(n+h)^{th}$ column of $\textbf{P}^k$ is zero.\\
$\bullet$ if $\bar{v}_h^{k+1}$ is positive, the $h^{th}$ row of $\textbf{G}^k$ has at least one positive entry. This is true because during the time $\mu_{kn}$ to $\mu_{(k+1)n}-1$, the corresponding link $(i,j)$, transmits successfully at least once, which sets the values of $\bar{v}_h$ and $\bar{u}_h$ to 0. Therefore since $\bar{v}_h^{k+1}$ is positive, link $(i,j)$ has failed at least once after the last successful transmission. Hence, $C_{hi}^k$ is positive, and therefore $G_{hi}^k$ is also positive.

Define the index set $I^k=\{h\vert \bar{v}_h^k>0\}$. If $h\notin I^k$ we have $\bar{r}_h^k = \bar{v}_h^k = 0$, and also the $(n+h)^{th}$ column of $\textbf{P}^k$ has only zero entries; hence, $\bar{r}_h^k$ does not influence any variable of time $k+1$. We also have for $h\notin I^{k+1}$ the $(n+h)^{th}$ row of $\textbf{P}^k$ has only zero entries. Thus, $\bar{r}_h^{k+1}$ is formed by the sum of zero numbers. Intuitively, this means that for $h\notin I^k$, $\bar{r}_h^k$ is zero and so are the coefficients related to it in \eqref{eq:zr=pzr}. Therefore it gives us no meaningful information and it can be ignored. For the rest of the proof, we assume that all the variables $\bar{r}_h^k$ considered in the equations are the ones with $h\in I^k$.

We obtain:
\begin{gather*}
\bar{r}_{\max}^{k+1}\leq \beta^k \bar{z}_{\max}^k + (1-\beta^k)\max\{\bar{z}_{\max}^k,\bar{r}_{\max}^k\},\\
\bar{z}_{\max}^{k+1}\leq \beta^k \min\{\bar{z}_{\min}^k,\bar{r}_{\min}^k\}
 + (1-\beta^k)\max\{\bar{z}_{\max}^k,\bar{r}_{\max}^k\}.
\end{gather*}
Then,
\begin{equation*}
\max\{\bar{z}_{\max}^{k+1},\bar{r}_{\max}^{k+1}\}\leq \beta^k \bar{z}_{\max}^k + (1-\beta^k)\max\{\bar{z}_{\max}^k,\bar{r}_{\max}^k\}.
\end{equation*}
Similarly,
\begin{equation*}
\min\{\bar{z}_{\min}^{k+1},\bar{r}_{\min}^{k+1}\}\geq \beta^k \bar{z}_{\min}^k + (1-\beta^k)\min\{\bar{z}_{\min}^k,\bar{r}_{\min}^k\}.
\end{equation*}
We also have:
\begin{gather*}
\bar{z}_{\max}^{k+1}\leq \beta^k \sum_{i=1}^n \bar{z}_i^k + (1-n\beta^k)\max\{\bar{z}_{\max}^k,\bar{r}_{\max}^k\},\\
\bar{z}_{\min}^{k+1}\geq \beta^k \sum_{i=1}^n \bar{z}_i^k + (1-n\beta^k)\min\{\bar{z}_{\min}^k,\bar{r}_{\min}^k\}.
\end{gather*}
Thus,
\begin{equation*}
\bar{z}_{\max}^{k+1}-\bar{z}_{\min}^{k+1}\leq
(1-n\beta^k)\left(\max\{\bar{z}_{\max}^k,\bar{r}_{\max}^k\}-\min\{\bar{z}_{\min}^k,\bar{r}_{\min}^k\}\right).
\end{equation*}
Equivalently,
\begin{gather*}
s^{k+1}\leq \beta^k t^k + (1-\beta^k)s^k,\\
t^{k+1}\leq (1-n\beta^k)s^k,
\end{gather*}
where $s^k=\max\{\bar{z}_{\max}^k,\bar{r}_{\max}^k\}-\min\{\bar{z}_{\min}^k,\bar{r}_{\min}^k\}$ and $t^k=\bar{z}_{\max}^{k}-\bar{z}_{\min}^{k}$. Observing that $0\leq t^k\leq s^k$, we obtain:
\begin{align*}
s^{k+1}&\leq \beta^k(1-n\beta^{k-1})s^{k-1}+(1-\beta^k)s^k\\
&\leq \beta^k(1-n\beta^{k-1})s^{k-1}+(1-\beta^k)s^{k-1}\\
&= (1-n\beta^k \beta^{k-1})s^{k-1}.
\end{align*}
Hence $\lim_{k\to\infty}s^k=0$ if $\prod_{k=1}^{\infty}\left(1-n\beta^{2k}\beta^{2k-1}\right)=0$, which, by Lemma \ref{lem:alpha}, holds true if and only if  $\sum_{k=1}^{\infty}\beta^{2k}\beta^{2k-1}=\infty$. Using (\ref{eq:Pk lower bound}), we have:
\begin{align*}
\sum_{k=1}^{\infty}\beta^{2k}\beta^{2k-1}&=\sum_{k=1}^{\infty} \alpha^{\lambda_{2k+1}+2\lambda_{2k}+2\lambda_{2k-1}+\lambda_{2k-2}+2}\\
&\geq \frac{1}{n^2} \sum_{k=K}^{\infty} \alpha^{-\frac{\ln(2k+1+T)}{\ln(\alpha)}}\\
&= \frac{1}{n^2}\sum_{k=K}^{\infty}\frac{1}{2k+1+T} = \infty.
\end{align*}
Hence $\max\{\bar{z}_{\max}^k,\bar{r}_{\max}^k\}-\min\{\bar{z}_{\min}^k, \bar{r}_{\min}^k\}$ converges to 0 as $k$ goes to infinity. Combining this with Lemma \ref{lem:stochastic matrices} we obtain,
\begin{equation}
\lim_{k\to \infty}\bar{z}_i^k = \lim_{k\to \infty ,\text{ } h\in I^k }\bar{r}_h^k = L. \label{eq:lim rz}
\end{equation}
We have:
\begin{align*}
L &= L \lim_{k \to \infty} \frac{\sum_{i=1}^n \bar{y}_i^k + \sum_{h=1}^m \bar{v}_h^k}{\sum_{i=1}^n \bar{y}_i^k + \sum_{h=1}^m \bar{v}_h^k}\\
&= \lim_{k \to \infty}\left(\frac{\sum_{i=1}^n \bar{z}_i^k\bar{y}_i^k + \sum_{h=1}^m \bar{r}_h^k\bar{v}_h^k}{n}\right)+\lim_{k \to \infty}\left(\frac{\sum_{i=1}^n (L-\bar{z}_i^k)\bar{y}_i^k + \sum_{h=1}^m (L-\bar{r}_h^k)\bar{v}_h^k}{n}\right)\\
&= \lim_{k \to \infty}\left(\frac{\sum_{i=1}^n \bar{x}_i^k + \sum_{h=1}^m \bar{u}_h^k}{n}\right)
+\lim_{k \to \infty}\left(\frac{\sum_{i=1}^n (L-\bar{z}_i^k)\bar{y}_i^k + \sum_{h=1}^m (L-\bar{r}_h^k)\bar{v}_h^k}{n}\right)\\
&= \frac{\sum_{i=1}^n x_i^0}{n},
\end{align*}
where in the last equality, we used \eqref{eq:lim rz}, and the fact that  $\bar{v}_h^k=0$ for $h \notin I^k$.
\end{proof}

\section{Conclusion}\label{sec:conclusion}
In this paper we established sufficient conditions on connectivity and link failures for consensus algorithms to converge. We started by showing that ordinary consensus and push-sum still work if intercommunication intervals do not grow too fast. Then we moved on to our main result, which is a fully asynchronous push-sum algorithm robust to link failures. We proved its convergence while allowing consecutive link failures to grow to infinity, as long as they remain smaller than a logarithmically growing upper bound.

This work can be extended by improving the upper bounds using ergodicity theory.
It is also possible to use our results to develop asynchronous distributed optimization algorithms robust to packet losses.



\end{document}